\theoremstyle{plain}
\newtheorem{theorem}{Theorem}[section]
\newtheorem{proposition}[theorem]{Proposition}
\newtheorem{corollary}[theorem]{Corollary}
\newtheorem{lemma}[theorem]{Lemma}
\newtheorem*{CT_theorem}{Cannon-Thurston Theorem}
\newtheorem*{cts_extension_theorem}{Continuous Extension Theorem}
\newtheorem*{closed_orbits_theorem}{Closed Orbits Theorem}
\theoremstyle{definition}
\theoremstyle{remark}
\newcommand{\bR}{\mathbb{R}}
\newcommand{\bH}{\mathbb{H}}
\newcommand{\cD}{\mathcal{D}}
\newcommand{\cE}{\mathcal{E}}
\newcommand{\orb}{P}
\newcommand{\Orb}{\overline{P}}
\newcommand{\rF}{\mathfrak{F}}
\newcommand{\Fr}{\text{Fr }}
\title{Quasigeodesic flows and sphere-filling curves}
\author{Steven Frankel}
\address{DPMMS \\ University of Cambridge \\ Cambridge CB3 0WA \\ United Kingdom}
\email{sf451@cam.ac.uk}
\date{\today}
\begin{document}

\begin{abstract}
Given a closed hyperbolic 3-manifold $M$ with a quasigeodesic flow we construct a $\pi_1$-equivariant sphere-filling curve in the boundary of hyperbolic space. Specifically, we show that any complete transversal $P$ to the lifted flow on $\bH^3$ has a natural compactification as a closed disc that inherits a $\pi_1$ action. The embedding $P \hookrightarrow \bH^3$ extends continuously to the compactification and the restriction $S^1 \to S^2_\infty$ to the boundary is a surjective $\pi_1$-equivariant map. This generalizes the result of Cannon and Thurston for fibered hyperbolic 3-manifolds.
\end{abstract}

\maketitle
\tableofcontents

\section{Introduction}
\subsection{Background and motivation}
Cannon and Thurston \cite{CannonThurston} showed that a hyperbolic 3-manifold fibering over the circle gives rise to a $\pi_1$-equivariant sphere-filling curve. The lift of a fiber to the universal cover is an embedded plane $P \hookrightarrow \bH^3$ which can be compactified to a disc $P \cup S^1_\infty$ in such a way that the embedding extends continuously to a map $P \cup S^1_\infty \to \bH^3 \cup S^2_\infty$, and the restriction $S^1_\infty \to S^2_\infty$ to the boundary is surjective and $\pi_1$-equivariant. 

The proof depends on the interaction between the geometry of the surface and the geometry of a certain transverse flow. A surface bundle over the circle is determined up to homeomorphism by its monodromy. If the resulting manifold is hyperbolic then the monodromy can be taken to be pseudo-Anosov, so its suspension gives rise to a pseudo-Anosov flow. It turns out that this flow is quasigeodesic in the sense that each lifted flowline is a quasi-isometrically embedded line in $\bH^3$, and one can use this fact to see that leaves of the stable/unstable foliations are quasi-isometrically embedded hyperbolic planes. Cannon and Thurston used the separation properties of these planes to prove the continuous extension property for the lifted surface fibers.

It turns out, and the main goal of this paper is to show, that the continuous extension property can be established in far greater generality and assuming far less initial structure. For any quasigeodesic flow on a hyperbolic 3-manifold, the lifted flow on the universal cover admits a complete planar transversal, i.e. a planar transversal that intersects every orbit. We show for {\em any} quasigeodesic flow that {\em any} complete transversal has the continuous extension property --- it can be compactified to a closed disk whose boundary circle maps continuously and surjectively to the sphere at infinity of hyperbolic 3-space. And although the transversal itself will typically have trivial stabilizer in $\pi_1$, its boundary admits a natural $\pi_1$-action, and the continuous surjection to $S^2_\infty$ is $\pi_1$-equivariant. Thus one obtains group invariant sphere-filling curves directly from the quasigeodesic property without appealing to a surface-bundle structure (which does not in general exist).

Our continuous extension theorem thus generalizes the Cannon-Thurston theorem to arbitrary quasigeodesic flows. Any finite depth taut foliation of a hyperbolic 3-manifold admits an almost-transverse quasigeodesic flow, so such flows can be found in abundance. Quasigeodesic flows are found even on non-fibered manifolds; in fact there are examples on homology spheres. Our argument is fundamentally different from the Cannon-Thurston argument, which makes use of 2-dimensional objects --- the surface fibers and the 2-dimensional leaves of stable/unstable foliations --- to reason about topology in 3-dimensions. By contrast, we work in the orbit space of the flow (a 2 manifold), and need to work with much more complicated objects, namely arbitrary closed continua, which are derived indirectly from the global geometry and dynamics of the transversal, and which hint at a kind of ``pseudo-Anosov structure in the large''. In fact, our main theorem may be thought of as a key step in Calegari's program to show that every quasigeodesic flow on a hyperbolic 3-manifold is homotopic (through quasigeodesic flows) to a quasigeodesic \emph{pseudo-Anosov} flow. Finally, our main theorem strengthens the main result of [3] by weakening the hypothesis under which a quasigeodesic flow can be shown to have a closed orbit.

\subsection{Statement of results}
A quasigeodesic is a quasi-isometric embedding of $\bR$ in a metric space $X$. That is, distances as measured in $X$ and $\bR$ are comparable on the large scale up to a multiplicative constant (see Section~\ref{sec:QuasigeodesicFlows} for a precise definition).

A nonsingular flow on a manifold is said to be quasigeodesic if the lift of each flowline to the universal cover is a quasigeodesic. Fix a quasigeodesic flow $\rF$ on a closed hyperbolic 3-manifold $M$. We can lift to a flow $\widetilde{\rF}$ on the universal cover $\widetilde{M} \simeq \bH^3$. The orbit space $P$ is the image of the quotient map $\pi:\bH^3 \to P$ that collapses each flowline in $\widetilde{\rF}$ to a point. This space is a plane and it comes with natural maps $e^\pm: P \to S^2_\infty$ that send each orbit to its forward/backward limit in the sphere at infinity. There is a natural compactification $\overline{P}$ of the orbit space as a closed disc called the \emph{end compactification} where the boundary is Calegari's universal circle $S^1_u$.

\begin{cts_extension_theorem}
Let $\rF$ be a quasigeodesic flow on a closed hyperbolic 3-manifold $M$. There are unique continuous extensions of the endpoint maps $e^\pm$ to the compactified orbit space $\overline{P}$, and $e^+$ agrees with $e^-$ on the boundary.
\end{cts_extension_theorem}

A complete transversal to the flow $\rF$ can be identified with the orbit space $P$, so we will think of it as a section $\psi: P \to S^2_\infty$ of the quotient map $\pi$. Thus it is immediate that any transversal can be compactified by adding the universal circle $S^1_u$. It is easy to show that we can extend $\psi$ by setting its values on the boundary equal to that of the $e^\pm$ maps and our generalization of the Cannon-Thurston theorem follows.

\begin{CT_theorem}
Let $\rF$ be a quasigeodesic flow on a closed hyperbolic 3-manifold $M$ and let $\psi:P \to \bH^3$ be a transversal to the lifted flow on the universal cover. There is a natural compactification of $P$ as a closed disc $P \cup S^1_u$ that inherits a $\pi_1$ action and a unique continuous extension $\psi:P \cup S^1_u \to \bH^3 \cup S^2_\infty$. The restriction of $\psi$ to $S^1_u$ is a $\pi_1$-equivariant space-filling curve in $S^2_\infty$.
\end{CT_theorem}

A group of homeomorphisms of the circle is said to be \emph{M\"obius-like} if each element is topologically conjugate to a M\"obius transformation, i.e. an element of $PSL(2, \bR)$ acting in the standard way on $\bR P^1$. It is said to be \emph{hyperbolic M\"obius-like} if each element is conjugate to a hyperbolic M\"obius transformation. We can use the extension theorem to improve on the main result in \cite{Frankel}.

\begin{closed_orbits_theorem}
Let $\rF$ be a quasigeodesic flow on a closed hyperbolic 3-manifold $M$. Suppose $\rF$ has no closed orbits. Then $\pi_1(M)$ acts on the universal circle $S^1_u$ as a hyperbolic M\"obius-like group. 
\end{closed_orbits_theorem}

Previously, we could only conclude that such a flow would have a ``rotationless M\"obius-like'' universal circle action, meaning that each element is conjugate to either a hyperbolic or parabolic M\"obius transformation.

\subsection{Future directions}
Given a closed hyperbolic 3-manifold $M$ with a quasigeodesic flow $\rF$, take two copies, $\overline{P}^+$ and $\overline{P}^-$, of the compactified orbit space and glue them together along the boundary. We'll call the resulting space the \emph{universal sphere} $S^2_u$. Since the $e^+$ and $e^-$ maps agree on the boundary one can define a map
\[ e: S^2_u \to S^2_\infty \]
that agrees with $e^+$ on $\overline{P}^+$ and $e^-$ on $\overline{P}^-$.

In addition, one can fill in the universal sphere to form a closed ball $B^3_u$ with a 1-dimensional foliation $\rF_u$ by connecting each point in $P^+$ to the corresponding point in $P^-$ with a line. This parallels Cannon and Thurston's construction for fibered hyperbolic manifolds in \cite{CannonThurston} and Fenley's ``model pre-compactification'' for hyperbolic manifolds with a pseudo-Anosov flow in \cite{Fenley}.

There is a natural decomposition of $S^2_u$ by point preimages $\{ e^{-1}(p) \}$. The fundamental group acts on $B^3_u$ preserving this decomposition and the foliation $\rF_u$. After collapsing each decomposition element to a point one recovers the \emph{topological} structure of the fundamental group acting on $\bH^3 \cup S^2_\infty$ and the foliation $\rF$.

A goal of Calegari's program is to show that one can deduce the pseudo-Anosov property of such a flow from the dynamics on the universal sphere. We would like to isotope a quasigeodesic flow in such a way that each decomposition element, restricted to $P^+$ or $P^-$, is a union of leaves of a singular foliation with ``pseudo-Anosov dynamics'' and conclude that the resulting flow is both quasigeodesic and pseudo-Anosov.

\subsection{Acknowledgements}
The arguments in this paper have gone through several iterations, each much longer and more technical than the present, and I am indebted to Danny Calegari for his careful and patient critique of each one. I would also like to thank Andr\'e de Carvalho, Sergio Fenley, and Dongping Zhuang for helpful discussions.

\section{Quasigeodesic flows and sphere-filling curves}
\subsection{Quasigeodesic flows} \label{sec:QuasigeodesicFlows}
Fix a closed hyperbolic 3-manifold $M$ with a quasigeodesic flow $\rF$.

We recall that this means that the lift $\gamma:\bR \to \bH^3$ of each flowline in the universal cover satisfies
\[ 1/k \cdot d(\gamma(x), \gamma(y)) - \epsilon \leq |x - y| \leq k \cdot d(\gamma(x), \gamma(y)) + \epsilon \]
for some constants $k, \epsilon > 0$ for all $x, y \in \bR$. We will not work directly with this metric property but with its topological consequences reviewed below. See \cite{Frankel}, Sections~2 \& 3 or \cite{Calegari} for a more detailed discussion.

We can lift $\rF$ to a flow $\widetilde{\rF}$ on the universal cover $\widetilde{M} \simeq \bH^3$. Each flowline in $\widetilde{\rF}$ has well-defined and distinct positive and negative endpoints in $S^2_\infty = \partial \bH^3$, and there is a constant $C$ such that each flowline has Hausdorff distance at most $C$ from the geodesic with the same endpoints.\footnote{\cite{BridsonHaefliger}, pp. 399-404} 

The orbit space $P$ of $\rF$ is the image of the map
\[ \pi: \bH^3 \to P \]
that collapses each flowline in $\widetilde{\rF}$ to a point. The orbit space is homeomorphic to the plane,\footnote{\cite{Calegari}, Theorem~3.12} and the action of $\pi_1(M)$ on $\bH^3$ by deck transformations descends to an action on $P$. The maps
\[ e^\pm: P \to S^2_\infty \]
that send each point to the positive/negative endpoint of the corresponding flowline are continuous  and have dense image.\footnote{\cite{Calegari}, Lemmas~4.3 \& 4.4} Since the endpoints of each flowline are distinct, $e^+(p) \neq e^-(p)$ for every $p \in P$.

\subsection{Sequences in $P$}
If we think about $\bH^3$ in the unit ball model we can endow $\overline{\bH^3} = \bH^3 \cup S^2_\infty$ with the Euclidean metric. This will be useful when considering sets that are ``close to the infinity.''

The following lemma says that sets that are close to infinity in $P$ have uniformly small diameter in $\overline{\bH^3}$.

\begin{lemma} \label{lemma:FlowlinesEventuallySmall}
For each $\epsilon > 0$ there is a compact set $A \subset \orb$ such that for each $p \in P \setminus A$, the diameter of $\pi^{-1}(p)$ is at most $\epsilon$ in the Euclidean metric.
\end{lemma}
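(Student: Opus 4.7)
The plan is to argue the contrapositive. Fix $\epsilon > 0$ and suppose no compact $A \subset P$ works. Exhausting $P$ by compact sets $A_1 \subset A_2 \subset \cdots$, I can extract a sequence $p_n \in P \setminus A_n$ with $\operatorname{diam}_{\text{Eucl}}(\pi^{-1}(p_n)) > \epsilon$; by construction $p_n$ eventually leaves every compact subset of $P$. I aim to derive a contradiction by showing that the flowlines $\pi^{-1}(p_n)$ must in fact collapse to Euclidean-small sets at infinity.

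First, since $\pi : \bH^3 \to P$ is continuous it sends compact sets to compact sets. Hence for any compact $K \subset \bH^3$ we have $p_n \notin \pi(K)$ for large $n$, so $\pi^{-1}(p_n) \cap K = \emptyset$; that is, the flowlines $\pi^{-1}(p_n)$ eventually avoid every compact subset of $\bH^3$. By the hyperbolic $C$-Hausdorff approximation recalled in Section~\ref{sec:QuasigeodesicFlows}, the geodesic $\gamma_n$ joining $e^-(p_n)$ to $e^+(p_n)$ also eventually avoids every compact subset of $\bH^3$.

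Next I would invoke two elementary facts about the ball model of $\bH^3$: (i) any biinfinite geodesic whose endpoints in $S^2_\infty$ are at Euclidean distance $\geq \delta$ comes within some hyperbolic distance $D(\delta) < \infty$ of the origin; and (ii) a hyperbolic $C$-ball around a point $x \in \bH^3$ has Euclidean diameter tending to $0$ as $d(x,O) \to \infty$. Fact (i) forces $|e^+(p_n) - e^-(p_n)|_{\text{Eucl}} \to 0$ (since $\gamma_n$ escapes every compact set), so the chord $\gamma_n \cup \{e^\pm(p_n)\}$ sits in a Euclidean ball of radius tending to $0$. Fact (ii) then shows that the hyperbolic $C$-neighborhood of $\gamma_n$, which contains $\pi^{-1}(p_n)$, is eventually of Euclidean diameter less than $\epsilon$, contradicting the choice of $p_n$.

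The main obstacle, if any, is simply keeping the two hyperbolic/Euclidean comparisons honest: verifying that a chord spanned by Euclidean-close endpoints really is Euclidean-small (even accounting for the hyperbolic bulge toward the origin), and that a hyperbolic $C$-ball shrinks at an exponential Euclidean rate as its center approaches $S^2_\infty$. Both are routine calculations in the ball model, and with them in hand the argument reduces to the compactness observation $\pi(K)$ compact together with the $C$-Hausdorff property of quasigeodesics.
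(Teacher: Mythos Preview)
Your proposal is correct and is essentially the contrapositive repackaging of the paper's direct argument: the paper simply chooses a large compact ball $B \subset \bH^3$, notes that geodesics missing $B$ have small Euclidean diameter and hence (via the $C$-Hausdorff bound) so do flowlines, and sets $A = \pi(B)$. Your facts (i) and (ii) are exactly the content of the paper's one-line assertion that ``each geodesic not intersecting $B$ has small diameter in the Euclidean metric \dots\ the same applies to flowlines,'' so the two proofs are the same in substance.
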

\begin{proof}
If $B \in \bH^3$ is a large compact ball then each geodesic not intersecting $B$ has small diameter in the Euclidean metric. Since each flowline has Hausdorff distance at most $C$ from a geodesic with the same endpoints, the same applies to flowlines.

Given $\epsilon > 0$ choose a compact ball $B \in \bH^3$ such that each flowline not intersecting $B$ has diameter less than $\epsilon$ and set $A = \pi(B)$.
\end{proof}

In particular, points close to infinity in $P$ correspond to flowlines with endpoints uniformly close together, i.e for each $\epsilon > 0$ there is a compact set $A \subset P$ such that for each $p \in P \setminus A$, the distance between $e^+(p)$ and $e^-(p)$ is at most $\epsilon$.

Recall that if $(A_i)_{i=1}^\infty$ is a sequence of sets in a topological space then $\limsup A_i$ is the set of points $x$ such that every neighborhood of $x$ intersects infinitely many of the $A_i$.

If $Y$ is a subset of a topological space $X$ then the frontier of $Y$ is defined by $\Fr Y := \overline{Y} \cap \overline{X \setminus Y}$.

\begin{lemma} \label{lemma:LimSups}
Let $(U_i)_{i=1}^\infty$ be a sequence of disjoint open sets in $P$ with frontiers $A_i = \Fr U_i$. Then 
\[ \limsup e^+(U_i) \subseteq \limsup e^+(A_i). \]
\end{lemma}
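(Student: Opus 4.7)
The plan is to argue by contradiction. Suppose $x \in \limsup e^+(U_i)$ but $x \notin \limsup e^+(A_i)$. Then there is an open neighborhood $W$ of $x$ in $S^2_\infty$ with $e^+(A_i) \cap W = \emptyset$ for all sufficiently large $i$. Choose a smaller open neighborhood $W'$ of $x$ with $\overline{W'} \subset W$, and extract a subsequence $U_{i_k}$ together with witnesses $p_k \in U_{i_k}$ satisfying $e^+(p_k) \in W'$. Replace each $U_{i_k}$ by its connected component containing $p_k$: since $P$ is locally connected, the frontier of such a component sits inside the original frontier $A_{i_k}$, so the assumption $e^+(A_{i_k}) \cap W = \emptyset$ is preserved, and the $U_{i_k}$ remain pairwise disjoint.

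Now consider the open set $V_k := (e^+)^{-1}(W') \cap U_{i_k}$, a neighborhood of $p_k$. Since $e^+(A_{i_k}) \cap W' = \emptyset$ (using $W' \subset W$), we have $V_k = (e^+)^{-1}(W') \cap \overline{U_{i_k}}$, so $V_k$ is disjoint from $A_{i_k}$. Let $C_k$ be the connected component of $V_k$ containing $p_k$; it is open in $P$. If the closure $\overline{C_k}$ in $P$ were contained in $U_{i_k}$, then $C_k$ would be clopen in the connected set $U_{i_k}$, forcing $C_k = U_{i_k}$ and hence $\overline{U_{i_k}} = \overline{C_k} \subseteq U_{i_k}$. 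That would make $U_{i_k}$ a clopen subset of the connected plane $P$, so $U_{i_k} = P$, which can happen for at most one index because the $U_i$ are pairwise disjoint. Hence for all large $k$ the closure $\overline{C_k}$ must meet $A_{i_k}$, say at a point $q_k$.

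Finally, $q_k$ is a limit of points of $C_k$, each of which maps under $e^+$ into $W'$, so continuity of $e^+$ gives $e^+(q_k) \in \overline{W'} \subset W$, contradicting $e^+(A_{i_k}) \cap W = \emptyset$. The chief obstacle is really only organizational: one has to justify the reduction to connected $U_{i_k}$, and one has to use \emph{nested} neighborhoods $W' \subset \overline{W'} \subset W$ so that continuity of $e^+$ transfers the witness from a point of $C_k$ to a limit point in $A_{i_k}$ while keeping it inside $W$. The only ingredients beyond this bookkeeping are the continuity of $e^+$, the local connectedness of $P$, and the disjointness of the $U_i$.
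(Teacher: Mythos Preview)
There is a genuine gap. The step ``if $\overline{C_k} \subset U_{i_k}$ then $C_k$ is clopen in $U_{i_k}$'' fails: $C_k$ is open, but $\overline{C_k} \subset U_{i_k}$ only says that the closure of $C_k$ taken in $U_{i_k}$ equals $\overline{C_k}$, which is typically strictly larger than $C_k$. Here is a counterexample to the entire approach. Take $P = \bR^2$, let $U_i$ be the open unit disc centered at $(0,5i)$, and define a continuous $e^+:P \to \bR \hookrightarrow S^2$ on the radius-$2$ disc about $(0,5i)$ as the radial tent function equal to $0$ at the center, $1$ on $A_i = \Fr U_i$, and $0$ again at radius $2$, with $e^+ \equiv 0$ elsewhere. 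Then $e^+(U_i) = [0,1)$ while $e^+(A_i) = \{1\}$ for every $i$, so $0 \in \limsup e^+(U_i) \setminus \limsup e^+(A_i)$. In your notation $C_k$ is a small open disc about the center with $\overline{C_k}$ strictly inside $U_{i_k}$, and your argument breaks exactly at the clopen claim.

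This example shows the lemma is \emph{not} a consequence of continuity of $e^+$, local connectedness of $P$, and disjointness of the $U_i$; something specific to the quasigeodesic setting is required. The paper supplies it by working in $\bH^3$: the flow-saturated set $\pi^{-1}(A_i)$ separates $\pi^{-1}(x_i)$ from $\pi^{-1}(x_{i-1})$ inside $\bH^3$, and Lemma~\ref{lemma:FlowlinesEventuallySmall} ensures that once the $x_i$ and the $A_i$ escape to infinity in $P$, their preimages have small Euclidean diameter in $\overline{\bH^3}$ and hence lie near $p = \lim e^+(x_i)$ and $Q = \limsup e^+(A_i)$ respectively. If $p \notin Q$ one can house these preimages in disjoint open subsets of $\overline{\bH^3}$, contradicting the separation. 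A preliminary case analysis reduces to the situation where the $x_i$ and $A_i$ do escape to infinity. This passage through the three-dimensional picture is the missing idea in your argument.
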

\begin{proof}
It suffices to show that if $(x_i \in U_i)_{i=1}^\infty$ is a sequence of points such that $e^+(x_i)$ converges, where we might have already replaced the $U_i$ with a subsequence, then $\lim e^+(x_i) \in \limsup e^+(A_i)$.

Fix such a sequence and set $p := \lim e^+(x_i)$ and $Q := \limsup e^+(A_i)$. We need to show that $p \in Q$.

{\bf Case 1:} Suppose that infinitely many of the $x_i$ are contained in a bounded set. Then after taking a subsequence we can assume that the $x_i$ converge to some point $x$ and note that $e^+(x) = p$ (see Figure~\ref{figure:LimSupsCase1}). Then since $A_i$ separates $x_i$ from $x_{i-1}$ for each $i$ we can find points $y_i \in A_i$ that also converges to $x$. But then $\lim e^+(y_i) = p$, so $p \in Q$ as desired.

\begin{figure}[h]
	\labellist
	\small\hair 2pt
	\pinlabel $x_1$ [b] at 20 35
	\pinlabel $x_2$ [b] at 54 35
	\pinlabel $x_3$ [b] at 79 35
%	\pinlabel $x_{i-1}$ [b] at 116 35
%	\pinlabel $x_{i}$ [b] at 128 35
	\pinlabel $x$ [b] at 151 35
	
	\pinlabel $U_1$ [c] at 20 119
	\pinlabel $U_2$ [c] at 54 119
	\pinlabel $U_3$ [c] at 78 119
	
	\pinlabel $A_1$ [tr] at 16 11
	\pinlabel $A_2$ [tr] at 50 11
	\pinlabel $A_3$ [tr] at 76 11
	
	\pinlabel $\cdots$ [c] at 98 33
	\pinlabel $\cdots$ [c] at 98 119
	
	\pinlabel $\cdots$ [c] at 142 33
	\pinlabel $\cdots$ [c] at 142 119

	\endlabellist
	\centering
		\includegraphics{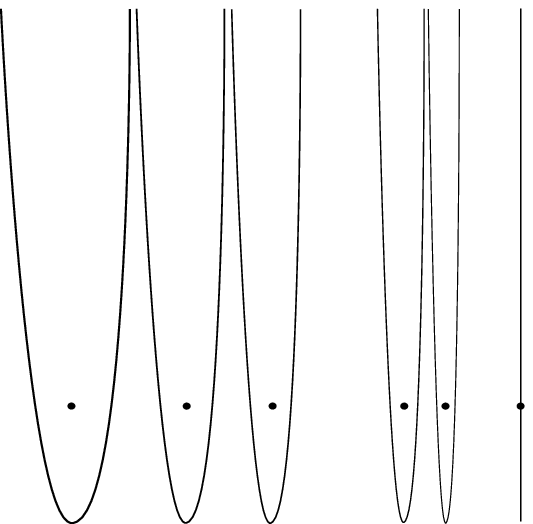}
	\caption{Case 1.} \label{figure:LimSupsCase1}
\end{figure}

So assume that the $x_i$ escape to infinity.

{\bf Case 2:} Suppose for the moment that the $A_i$ escape to infinity, i.e. that they are eventually disjoint from every bounded set. If $p \notin Q$ then we can find disjoint open sets $U$ and $V$ in $\overline{\bH^3}$ that contain $p$ and $Q$ respectively. Since sets in $P$ close to infinity correspond to small sets in $S^2_\infty$ (Lemma~\ref{lemma:FlowlinesEventuallySmall}) it follows that $\pi^{-1}(x_i)$ and $\pi^{-1}(A_i)$ are eventually contained in $U$ and $V$ respectively (see Figure~\ref{figure:LimSupsCase2}). But this contradicts the fact that $\pi^{-1}(A_i)$ separates $\pi^{-1}(x_i)$ from $\pi^{-1}(x_{i-1})$ for each $i$. Thus $p \in Q$.

\begin{figure}[ht]
	\labellist
	\small\hair 2pt
	\pinlabel $x_1$ [b] at 20 35
	\pinlabel $x_2$ [b] at 54 49
	\pinlabel $x_3$ [b] at 78 60
	\pinlabel $\cdots$ [c] at 98 119
	\pinlabel $\cdots$ [c] at 142 140
	
	\pinlabel $\pi$ [b] at 167 127
	
%	\pinlabel $\pi^{-1}(x_{i-1})$ [tl] at 209 88
%	\pinlabel $\pi^{-1}(x_i)$ [tl] at 217 96
	\pinlabel $p$ [b] at 216 103
	
	\pinlabel $Q$ [b] at 261 120
	
	\pinlabel $U$ [tl] at 227 89
	\pinlabel $V$ [tl] at 291 96

	\endlabellist
	\centering
		\includegraphics{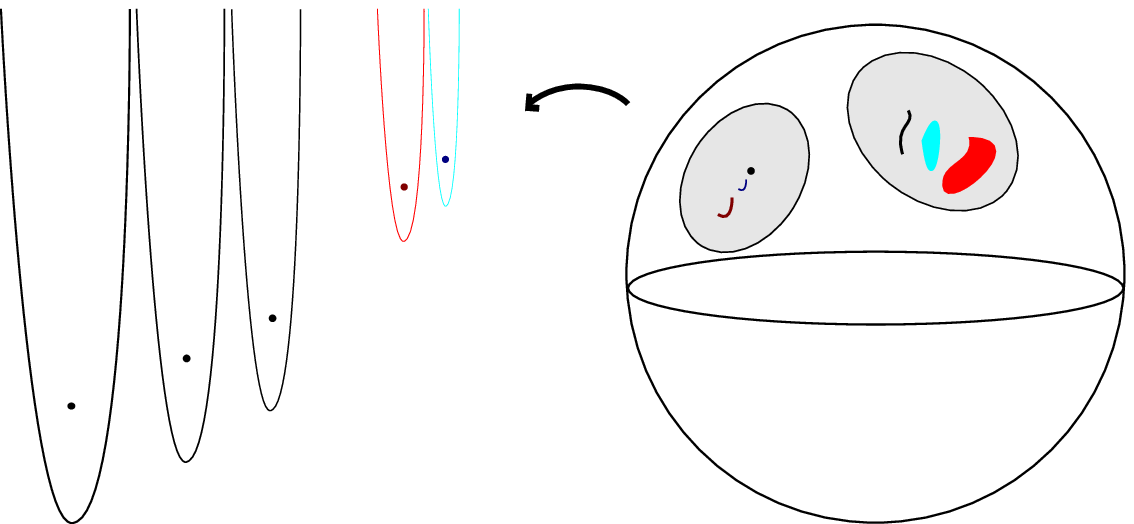}
	\caption{Case 2.} \label{figure:LimSupsCase2}
\end{figure}

{\bf Case 3:} Now suppose that the $A_i$ do not escape to infinity. We will cut a compact set out of each $U_i$ to make the $A_i$ escape to infinity. In general this might change $Q = \limsup A_i$ but by choosing carefully we can ensure that $Q$ shrinks.

Choose an exhaustion $(D_j)_{j=1}^\infty$ of $P$ by compact discs, each of which intersects \emph{all} of the $U_i$. Note that for each $j$, the sets $D_j \cap U_i$ are all contained in the bounded set $D_j$. The argument in case 1 shows that that for a bounded sequence of points $y_i \in U_i$, $e^+(y_i)$ is eventually close to $Q$. So for fixed $j$, $e^+(D_j \cap U_i)$ is eventually close to $Q$ by the same argument. 

It follows that we can find a ``diagonal sequence'': For each $j$ choose an integer $i(j)$ sufficiently large so that (a) $x_{i(j)}$ is outside of $D_j$ and such that (b) $\limsup_{j \to \infty} e^+(D_j \cap U_{i(j)}) \subset Q$. Now for each $j$, let $y_j := x_{i(j)}$, let $V_j$ be the component of $P \setminus (D_j \cup U_{i(j)})$ that contains $y_j$, and let $B_j := \Fr V_j$. Also, set $Q' := \limsup_{j \to \infty} B_j$ and observe that since each point in $B_j$ is contained in either $A_{i(j)}$ or $(D_j \cap U_{i(j)})$, property (b) ensures that $Q' \subset Q$. See Figure~\ref{figure:LimSupsCase3}.

Now the $B_j$ escape to infinity, so by case 2 $\lim e^+(y_j) \subset Q'$ and therefore $p \in Q$ as desired.

\begin{figure}[ht]
	\labellist
	\small\hair 2pt
	
	\pinlabel $x_{i(j)}$ [b] at 120 95
	\pinlabel $\color{blue}{B_{j}}$ [l] at 125 83

	\pinlabel {$D_j \cap U_{i(j)}$} [l] at 170 58
	
	\pinlabel {$\Fr D_j$} [l] at 168 46
	
%	\pinlabel $D_1$ [c] at 136 11
%	\pinlabel $D_2$ [c] at 136 21
%	\pinlabel $\vdots$ [c] at 136 38
%	\pinlabel $D_j$ [c] at 136 48
	
	\pinlabel $\cdots$ [c] at 99 128
	\pinlabel $\cdots$ [c] at 137 128
	\pinlabel $\vdots$ [c] at 100 49
	\pinlabel $\vdots$ [c] at 100 85

	\endlabellist
	\centering
		\includegraphics{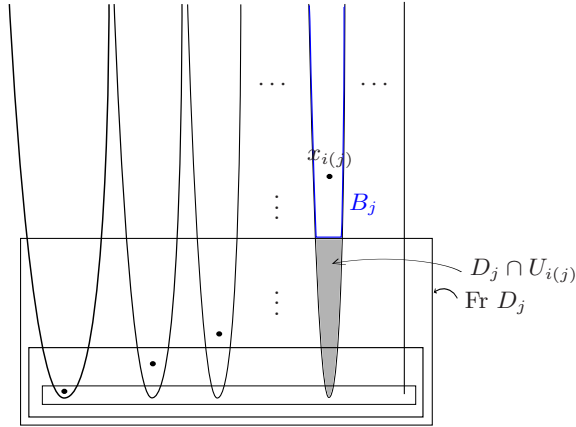}
	\caption{Case 3: Fixing the $A_i$.} \label{figure:LimSupsCase3}
\end{figure}
\end{proof}

\begin{corollary} \label{corollary:FinitelyManyCompCpnts}
Let $B, C \subset S^2_\infty$ be disjoint compact sets. Then $(e^+)^{-1}(B)$ intersects at most finitely many components of $P \setminus (e^+)^{-1}(C)$.
\end{corollary}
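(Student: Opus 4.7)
The plan is to argue by contradiction and then feed the offending sequence of components into Lemma~\ref{lemma:LimSups}. Since $e^+$ is continuous and $C$ is compact, the set $(e^+)^{-1}(C)$ is closed, so $P \setminus (e^+)^{-1}(C)$ is open; because $P$ is locally connected (it is a plane), its components are open and pairwise disjoint.

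Suppose the conclusion fails, so we can pick distinct components $U_1, U_2, \ldots$ of $P \setminus (e^+)^{-1}(C)$ together with points $x_i \in U_i$ with $e^+(x_i) \in B$. Using compactness of $B$, pass to a subsequence so that $e^+(x_i)$ converges to some $p \in B$. The key observation is that the frontier $A_i := \Fr U_i$ is contained in $(e^+)^{-1}(C)$: any point $y \in A_i$ lies in $\overline{U_i} \setminus U_i$, and if $y$ were not in $(e^+)^{-1}(C)$ it would have a connected neighborhood in the open set $P \setminus (e^+)^{-1}(C)$ meeting $U_i$, forcing $y \in U_i$ since $U_i$ is a component. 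Hence $e^+(A_i) \subseteq C$ for all $i$.

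Now apply Lemma~\ref{lemma:LimSups} to the disjoint open sets $U_i$: this gives
\[ \limsup e^+(U_i) \subseteq \limsup e^+(A_i) \subseteq C, \]
where the second inclusion uses that $C$ is closed. But $e^+(x_i) \in e^+(U_i)$ and $e^+(x_i) \to p$, so $p \in \limsup e^+(U_i) \subseteq C$. This contradicts $p \in B$ and $B \cap C = \emptyset$.

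There is no real obstacle here: all the work has been done in Lemma~\ref{lemma:LimSups}. The only item that warrants a moment's care is the inclusion $\Fr U_i \subseteq (e^+)^{-1}(C)$, which is just the standard fact that frontiers of components of an open set in a locally connected space lie in the complement of that open set.
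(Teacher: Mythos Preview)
Your proof is correct and follows essentially the same approach as the paper's own proof: argue by contradiction, pick points in infinitely many distinct components, and apply Lemma~\ref{lemma:LimSups} to force a limit point into $C$ while it also lies in $B$. You have simply supplied more detail than the paper does---in particular the justification that $\Fr U_i \subseteq (e^+)^{-1}(C)$ and the explicit passage to a convergent subsequence---but the underlying argument is the same.
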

\begin{proof}
Otherwise we could find a sequence of points $b_i$ in $(e^+)^{-1}(B)$, each of which is contained in a different component of $P \setminus (e^+)^{-1}(C)$. Then $\limsup e^+(b_i) \in C$ by the preceeding lemma. But $\limsup e^+(b_i) \in B$, so $B \cap C \neq \emptyset$, a contradiction.
\end{proof}

\subsection{Closures of decomposition elements}

Recall (\cite{Frankel}, Section~3) that the \emph{positive decomposition} of $P$ is defined as
\[ \cD^+ := \{ \text{connected components of } (e^+)^{-1}(p) | p \in S^2_\infty \}. \]
Each decomposition element $K \in \cD^+$ is an \emph{unbounded continuum} in $P$, i.e. a noncompact, closed, and connected set. The action of $\pi_1(M)$ on $P$ permutes the elements of $\cD^+$. The \emph{negative decomposition} $\cD^-$ is defined similarly using the $e^-$ map, and we set $\cD := \cD^+ \cup \cD^-$.

An unbounded continuum $K$ in the plane has a naturally defined set of \emph{ends} $\cE(K)$, where an end $\kappa \in \cE(K)$ is a map such that for each bounded set $D \in P$, $\kappa(D)$ is an unbounded component of $K \setminus D$, and if $D \subset D'$ then $\kappa(D') \subset \kappa(D)$.

If $K \in \cD^+$ is a positive decomposition element and $L \in \cD^-$ is a negative decomposition element then $K \cap L$ is compact.\footnote{\cite{Frankel}, Lemma~3.3}

The set of \emph{ends}
\[ \cE := \bigcup_{K \in \cD} \cE(K) \]
comes with a natural circular order. For a triple $\kappa, \lambda, \mu \in \cD$ choose a bounded open disc $D \subset P$ such that $\kappa(D)$, $\lambda(D)$, and $\mu(D)$ are disjoint and mutually nonseparating (i.e. no one separates the other two) and let $\gamma$ be an oriented arc from $\kappa(D)$ to $\mu(D)$. Then $\kappa, \lambda, \mu$ is positively ordered if $\lambda(D)$ is on the positive side of $\gamma$ and negatively ordered otherwise. If $\kappa, \mu \in \cE$ then the \emph{open interval} $(\kappa, \mu)$ is the set of $\lambda \in \cE$ such that $\kappa, \lambda, \mu$ is positively ordered.

The action of $\pi_1(M)$ on $P$ permutes the elements of $\cE$ preserving the circular order.

There is a natural \emph{order completion} $\widetilde{\cE}$ that is homeomorphic to a closed subset of a circle and $\cE$ is dense in $\widetilde{\cE}$. We can collapse complementary intervals in $\widetilde{\cE}$ form the universal circle $S^1_u$, and $\widetilde{\cE}$ surjects onto this.\footnote{See \cite{Frankel}, Construction~7.3.} We'll denote by $\phi: \cE \to S^1_u$ the composition of this inclusion and surjection. Both $\phi(\cE^+)$ and $\phi(\cE^-)$ are dense in $S^1_u$, where $\cE^+$ and $\cE^-$ are the ends that come from positive and negative decomposition elements respectively, and there are at most countably many ends in the preimage of each point. The action of $\pi_1(M)$ on $\cE$ extends to $\widetilde{\cE}$ and induces an action on $S^1_u$ by homeomorphisms.

The spaces $P$ and $S^1_u$ glue together to form the \emph{end compactification} $\overline{P}$, which is homeomorphic to a closed disc with interior $P$ and boundary $S^1_u$. The actions of $\pi_1(M)$ on $P$ and $S^1_u$ match up to form an action on $\overline{P}$.\footnote{\cite{Frankel}, Theorem~7.9}

See \cite{Frankel} for a more complete discussion of ends, circular orders, and universal circles. In order to prove the extension theorem we will need to work out the details of how decomposition elements look with respect to the end compactification. In particular, distinct positive/negative decomposition elements are disjoint in $P$ but their closures in $\overline{P}$ might intersect. We'll concentrate on $\cD^+$ for notational simplicity, though everything we say applies to $\cD^-$. 

First, we'll recall how the separation properties of decomposition elements in the orbit space reflect the separation properties of their ends. Two subsets $A$ and $B$ of a circularly ordered set $S$ are said to \emph{link} if there are $a, a' \in A$ and $b, b' \in B$ such that $b \in (a, a')$ and $b' \in (a', a)$. The set $C \subset S$ is said to \emph{separate} $A$ and $B$ if there are $c, c' \in C$ such that $A \subset (c, c')$ and $B \subset (c', c)$.

\begin{lemma} [\cite{Frankel}, Proposition 6.9] \label{lemma:Nonlinking}
Let $K, L \in \cD^+$. Then $\cE(K)$ and $\cE(L)$ do not link.

Let $K, L, M \in \cD^+$. Then $K$ separates $L$ from $M$ if and only if $\cE(K)$ separates $\cE(L)$ from $\cE(M)$.
\end{lemma}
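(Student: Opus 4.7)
The plan is to treat both assertions as topological separation facts about disjoint unbounded continua in the plane $P$, read off from the boundary of a sufficiently large bounded open disc $D$. The main device I would set up first is the following shadow dictionary: if $D$ is chosen so that $K \cap D \neq \emptyset$ and the ends under consideration correspond to mutually disjoint unbounded components of $K \setminus D$ (and likewise for $L$ and $M$), then each such component $\kappa(D)$ has a nonempty shadow $S_\kappa := \overline{\kappa(D)} \cap \partial D$ on $\partial D$, because connectedness of $K$ together with $K \cap D \neq \emptyset$ forces the closure of $\kappa(D)$ to touch $\partial D$. Unwinding the ``side of $\gamma$'' definition of the circular order then shows that the cyclic order of shadows around $\partial D$ agrees with the circular order on $\cE$ restricted to the ends at hand.

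For the non-linking claim I would argue by contradiction. Assume $\kappa, \kappa' \in \cE(K)$ alternate with $\lambda, \lambda' \in \cE(L)$, so the shadows $S_\kappa, S_\lambda, S_{\kappa'}, S_{\lambda'}$ alternate around $\partial D$. Pick $k \in S_\kappa$, $k' \in S_{\kappa'}$, and by connectedness of $K$ choose a subcontinuum $\alpha \subset K$ joining $k$ to $k'$. In the one-point compactification $S^2 = P \cup \{\infty\}$, the set $\Gamma := \alpha \cup \kappa(D) \cup \kappa'(D) \cup \{\infty\}$ is a compact connected subset of $K \cup \{\infty\}$ that is topologically a ``triangle'' of three continua meeting at the vertices $\infty, k, k'$. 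By a Janiszewski/Jordan-type separation argument in $S^2$, the complement $S^2 \setminus \Gamma$ splits into components with $\lambda(D)$ and $\lambda'(D)$ on opposite sides, the alternation of shadows being exactly the input that forces this. But $L$ is connected, disjoint from $K$ hence from $\Gamma$, avoids $\infty$, and contains both $\lambda(D)$ and $\lambda'(D)$, giving the contradiction.

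For the separation equivalence, the backward direction runs on the same machinery: if $\cE(L) \subset (\kappa, \kappa')$ and $\cE(M) \subset (\kappa', \kappa)$ for some $\kappa, \kappa' \in \cE(K)$, the same $\Gamma$-construction in $S^2$ places every far-out end of $L$ and every far-out end of $M$ on opposite sides of $\Gamma$, and connectedness of $L$ and $M$ together with $L \cap K = M \cap K = \emptyset$ upgrades this to $K$ separating $L$ from $M$ in $P$. For the forward direction, assuming $L, M$ lie in different components of $P \setminus K$, I would look at the two arcs of $\partial D$ separating the shadows of $\cE(L)$ from those of $\cE(M)$, extract bounding ends $\kappa, \kappa' \in \cE(K)$ from their endpoints, and invoke the non-linking result to conclude that these two ends separate $\cE(L)$ from $\cE(M)$ in the circular order.

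The main obstacle is that $K$ is only a continuum and need not be locally connected or path connected, so the obvious approach of drawing arcs inside $K$ is unavailable. The plan sidesteps this by doing all Jordan-type topology on the circle $\partial D$ via shadows and invoking connectedness of $K$ only in the weak form that some subcontinuum $\alpha \subset K$ joins two prescribed shadow points; everything else is Janiszewski/Jordan in the sphere $S^2$. A secondary technical point is verifying that the shadow cyclic order genuinely matches the end order, which I would reduce by enlarging $D$ further to the ``mutually nonseparating'' configuration built into the definition of the circular order.
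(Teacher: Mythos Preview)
The paper does not prove this lemma; it is quoted without proof from \cite{Frankel}, Proposition~6.9, so there is no argument here to compare yours against.

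Your approach via shadows on $\partial D$ and separation in $S^2$ is the natural one, and it is essentially how such statements are established. Two places would need tightening before the argument is complete. First, the claim that $\Gamma$ separates $\lambda(D)$ from $\lambda'(D)$ is the crux, and ``a Janiszewski/Jordan-type separation argument'' is too vague as stated: shadows need not be single points, so you want a precise disc lemma such as ``in a closed $2$-disc $\Delta$, a continuum $C$ meeting $\partial\Delta$ separates distinct components of $\partial\Delta \setminus C$ from one another in $\Delta$,'' applied with $\Delta = S^2 \setminus D$ and $C = \kappa(D) \cup \{\infty\} \cup \kappa'(D)$, together with an explicit check that the linking hypothesis really forces $S_\lambda$ and $S_{\lambda'}$ into distinct components of $\partial D \setminus (S_\kappa \cup S_{\kappa'})$.

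Second, your forward direction is too quick. ``Extract bounding ends $\kappa, \kappa' \in \cE(K)$ from the arc endpoints'' presupposes that the shadows of $\cE(L)$ already sit in a single arc of $\partial D$, which is essentially what you are trying to prove; and even granting that, when $K$ has infinitely many ends there may be no actual end of $K$ at an arc endpoint, only accumulation. A cleaner route is to use the non-linking statement you have already established: it forces all of $\cE(L)$ into a single complementary interval of $\cE(K)$ in the circular order, and likewise $\cE(M)$; it then suffices to show that if these two intervals coincide, $L$ and $M$ lie in the same complementary component of $K$ in $P$, contradicting the hypothesis. Any two ends of $K$ picked on either side of the two distinct intervals then witness the required separation.
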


The following two lemmas are the basic results we'll need about the closures of decomposition elements.

\begin{lemma}[\cite{Frankel}, Lemma 7.8] \label{lemma:EndsDense}
Let $K \in \cD^+$. Then
\[ \overline{K} = K \cup \overline{\phi(\cE(K)}) \]
where the closure is taken in $\overline{P}$.
\end{lemma}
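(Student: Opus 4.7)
The plan is to establish the two inclusions separately; the direction $K \cup \overline{\phi(\cE(K))} \subseteq \overline{K}$ is the easier one. The set $K$ is trivially in $\overline{K}$, and for each end $\kappa \in \cE(K)$ I would realize $\phi(\kappa)$ as the limit in $\overline{P}$ of a sequence $x_j \in \kappa(D_j)$, where $D_1 \subset D_2 \subset \cdots$ is an exhaustion of $P$ by compact discs. The convergence $x_j \to \phi(\kappa)$ in $\overline{P}$ should follow directly from the construction of the end compactification (Construction 7.3 of \cite{Frankel}): any basic neighborhood of $\phi(\kappa)$ is determined by a bracketing pair of ends and a compact disc, and for $j$ large the nested unbounded components $\kappa(D_j) \ni x_j$ fall inside the corresponding sector. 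Taking closures then yields $\overline{\phi(\cE(K))} \subseteq \overline{K}$.

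For the reverse inclusion, take $q \in \overline{K}$. If $q \in P$ then $q \in K$ by closedness of $K$ in $P$, so the interesting case is $q \in S^1_u$, and it suffices to show that every open neighborhood $W$ of $q$ in $S^1_u$ meets $\phi(\cE(K))$. Using density of $\phi(\cE)$ in $S^1_u$, I would shrink $W$ to a basic neighborhood $U = \sigma \cup J$ of $q$ in $\overline{P}$, where $J \subset S^1_u$ is an open arc with endpoints $\phi(\kappa_1), \phi(\kappa_2)$ for some $\kappa_1, \kappa_2 \in \cE$, and $\sigma \subset P$ is the sector facing $J$ bounded by the unbounded components $\kappa_1(D), \kappa_2(D)$ for some compact disc $D$. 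If either $\kappa_i$ already lies in $\cE(K)$ the proof is complete. Otherwise, by the disjointness of distinct elements of $\cD^+$ and the compactness of $K \cap L$ for $L \in \cD^-$, I can enlarge $D$ so that $(\kappa_1(D) \cup \kappa_2(D)) \cap K = \emptyset$. Since $K$ is connected, it now lies in a single component of $P \setminus (\kappa_1(D) \cup \kappa_2(D))$; if this component is not $\sigma$, then $U \cap K = \emptyset$, contradicting $q \in \overline{K}$. Hence $K \subseteq \sigma$, and any end $\kappa \in \cE(K)$ then satisfies $\kappa(D') \subseteq K \subseteq \sigma$ for all $D' \supseteq D$, forcing $\phi(\kappa) \in \overline{J} \subseteq W$.

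The hard part will be the last step: justifying that $\kappa(D') \subseteq \sigma$ for all large $D'$ truly forces $\phi(\kappa)$ into $\overline{J}$. This is a compatibility statement between the sector geometry in $P$ and the topology on $S^1_u$ induced by the order completion $\widetilde{\cE}$, and it requires a close reading of Construction 7.3 of \cite{Frankel}. A minor subtlety is that several ends may share a $\phi$-image, so an end pointing into $\sigma$ could a priori have $\phi$-image on the boundary of $J$ rather than in $J$ itself; but since $\overline{J} \subseteq W$ this does not affect the conclusion.
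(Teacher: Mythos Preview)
The paper does not supply a proof of this lemma at all: it is quoted verbatim from \cite{Frankel}, Lemma~7.8, and no argument appears between its statement and the next lemma. So there is nothing in the present paper to compare your proposal against.

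On the substance of your proposal: the overall two-inclusion strategy is the natural one, and the first inclusion is fine. In the reverse inclusion, however, the step ``hence $K \subseteq \sigma$'' does not follow. The sector $\sigma$ is bounded by $\kappa_1(D)$, $\kappa_2(D)$, \emph{and an arc of $\partial D$}; it is therefore not a full connected component of $P \setminus (\kappa_1(D) \cup \kappa_2(D))$. The component containing $\sigma$ also contains (part of) $D$, and $K$ may very well meet $D$. So from ``$K$ is connected and disjoint from $\kappa_1(D)\cup\kappa_2(D)$'' you cannot conclude that $K$ lies entirely inside $\sigma$.

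What you actually need, and what your hypotheses do give, is weaker but sufficient: since $q \in \overline{K}$ there is a sequence $x_n \in K \cap \sigma$ escaping every compact set, so $K \cap \sigma$ is unbounded. The only part of $\partial\sigma$ that $K$ can cross is the compact arc on $\partial D$, so some unbounded component of $K \setminus D$ is contained in $\sigma$. That component determines an end $\kappa \in \cE(K)$ with $\kappa(D) \subseteq \sigma$, and then your final compatibility step yields $\phi(\kappa) \in \overline{J} \subseteq W$. With this correction the argument goes through.
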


\begin{lemma} \label{lemma:CountablyManyDecompElts}
For each point $x \in \partial \Orb$ there are at most countably many decomposition elements $K \in \cD^+$ with $\overline{K} \ni x$.
\end{lemma}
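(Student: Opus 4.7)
The plan is to invoke Lemma~\ref{lemma:EndsDense} to reduce the problem: since $\overline{K} \cap \partial \overline{P} = \overline{\phi(\mathcal{E}(K))}$ with closure taken in $S^1_u$, the condition $x \in \overline{K}$ is equivalent to $x \in \overline{\phi(\mathcal{E}(K))}$. I will then split the decomposition elements $K \in \mathcal{D}^+$ with $x \in \overline{K}$ into two types: type (a), those for which $x \in \phi(\mathcal{E}(K))$, i.e., some end of $K$ maps to $x$; and type (b), those for which $x$ is a limit point of $\phi(\mathcal{E}(K))$ but does not itself lie in $\phi(\mathcal{E}(K))$.

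For type (a), each such $K$ contributes at least one end $\kappa \in \mathcal{E}(K)$ with $\phi(\kappa) = x$. Ends of distinct positive decomposition elements are disjoint as subsets of $\mathcal{E}$, so the assignment $K \mapsto \kappa$ is an injection of the type (a) elements into $\phi^{-1}(x) \cap \mathcal{E}^+$, which is at most countable by the construction of $\phi$ recalled in the preceding discussion. Hence type (a) contributes at most countably many $K$'s.

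For type (b), I will use non-linking (Lemma~\ref{lemma:Nonlinking}) to show at most two such $K$'s exist. Given distinct $K_1, K_2$ both of type (b), non-linking of $\mathcal{E}(K_1)$ and $\mathcal{E}(K_2)$ in $\mathcal{E}$, which passes to non-linking of their images in $S^1_u$ since $\phi$ is monotone, yields points $c, d \in S^1_u$ with $\phi(\mathcal{E}(K_1))$ lying in one closed arc with endpoints $\{c,d\}$ and $\phi(\mathcal{E}(K_2))$ in the complementary arc. Taking closures in $S^1_u$, both must contain $x$; since the two complementary closed arcs intersect only at $\{c,d\}$, this forces $x \in \{c,d\}$. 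Thus each $\phi(\mathcal{E}(K_i))$ lies in a closed arc with $x$ as one endpoint, so each $K_i$ accumulates on $x$ from exactly one of the two sides of $x$ in $S^1_u$, and they do so from opposite sides. With only two sides available, no third type (b) element can coexist (and a single $K$ accumulating from both sides, which cannot fit in an arc with $x$ as an endpoint, must be alone in its type). Combining with type (a), the total count is at most countable $+ 2$, which is countable.

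The main obstacle is the circular-order bookkeeping in type (b), in particular deducing that the separator points $c, d$ must include $x$ and then extracting the "opposite sides" dichotomy. A supporting technicality is checking that non-linking of $\mathcal{E}(K_1), \mathcal{E}(K_2)$ in $\mathcal{E}$ transfers to non-linking of their images in $S^1_u$ under the monotone map $\phi$, which is routine but must be handled in the presence of the interval collapses used in the construction of $S^1_u$.
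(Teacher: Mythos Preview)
Your proposal is correct and follows essentially the same approach as the paper: countably many type~(a) elements by countability of $\phi^{-1}(x)$, and at most two type~(b) elements via non-linking. The paper's version of the type~(b) step is slightly more direct---rather than extracting separating points $c,d$ in $S^1_u$, it simply observes that if two decomposition elements both had ends approaching $x$ from the same side, one could interleave their ends to produce a linked four-tuple in $\cE$ itself, contradicting Lemma~\ref{lemma:Nonlinking}. One small caution on your phrasing: unlinked subsets of a circle are not in general separable into complementary closed arcs by two points (consider the case where one set is contained in the other), so your arc separation should really be extracted in $\cE$, where $\cE(K_1)$ and $\cE(K_2)$ are genuinely disjoint, and then pushed forward by the monotone map $\phi$; once you do that, the rest of your argument goes through as written.
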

\begin{proof}
By Lemma~\ref{lemma:EndsDense}, each decomposition element $K$ whose closure intersects $x$ either has an end at $x$ (i.e. there is a $\kappa \in \cE(K)$ with $\phi(\kappa) = x$) or has ends approaching $x$ (i.e. there are $\kappa_i \in \cE(K)$ with $\phi(\kappa_i) \to x$ as $i \to \infty$). Recall that there are countably many decomposition elements with an end at $x$, so it suffices to show that there are countably many with ends approaching $x$. In fact there are at most two.

We will say that a decomposition element $K \in \cD^+$ has ends approaching $x$ in the \emph{positive direction} if there are $\kappa_i \in \cE(K)$ such that $\phi(\kappa_i) \to x$ as $i \to \infty$ and $\phi(\kappa_{i+1}) \subset (\phi(\kappa_i), x)$ for all $i$. There can be at most one decomposition element with ends approaching $x$ in the positive direction since if there were two then their ends would link, contradicting Lemma~\ref{lemma:Nonlinking}. Similarly, there is at most one decomposition element with ends approaching $x$ in the negative direction.
\end{proof}

\subsection{Unions of decomposition elements}

\begin{lemma} \label{lemma:SomeComponentSeparates}
Let $A$ be a closed subset of the plane that separates the points $x$ and $y$. Then some component of $A$ separates $x$ and $y$.
\end{lemma}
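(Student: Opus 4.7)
The statement is the classical Phragm\'en--Brouwer theorem for the plane, a consequence of the unicoherence of $\bR^2$. My plan is to prove it by identifying the desired component of $A$ as the one containing the frontier of a suitably chosen complementary region, and then using unicoherence to establish the connectivity of that frontier.

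First I would let $U$ be the connected component of $P \setminus A$ containing $x$. Since $A$ is closed and separates $x$ from $y$, the set $U$ is open and $y \notin U$. A standard argument gives $\overline{U} \subseteq U \cup A$: any limit point of $U$ lying in some other component $U'$ of $P \setminus A$ would, by openness of $U'$, force $U \cap U' \neq \emptyset$, a contradiction. Now let $V$ be the connected component of $P \setminus \overline{U}$ containing $y$; this is well-defined because $y \notin A \cup U$ implies $y \notin \overline{U}$. Then $V$ is open, $x \notin \overline{V}$, and $\Fr V \subseteq \Fr U \subseteq A$. Moreover $\Fr V$ separates $x$ from $y$ in $P$: writing $P \setminus \Fr V = V \cup (P \setminus \overline{V})$ as a disjoint union of open sets, we have $y \in V$ and $x \in P \setminus \overline{V}$.

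The heart of the argument is to show that $\Fr V$ is contained in a single connected component $C$ of $A$; such a $C$ will then separate $x$ from $y$ and complete the proof. For this I would invoke the unicoherence of the plane: $\overline{U}$ and $\overline{V}$ are both closed and connected (as closures of connected sets), and their intersection is exactly $\Fr V$, so unicoherence of $\bR^2$ (or of its one-point compactification $S^2$) would yield connectivity of $\Fr V$. I expect this unicoherence step to be the main obstacle, since some care is needed when $\overline{V}$ fails to be compact; one natural route is to pass to $S^2$ by adjoining a point at infinity to $A$, apply the theorem there, and then descend the resulting connected separating subset back to a component of $A$ in $P$.
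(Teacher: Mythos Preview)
Your construction of $U$ and $V$ is exactly the paper's, and the paper likewise finishes by showing $\Fr V$ is connected and contained in $A$. The only issue is your unicoherence step: unicoherence requires the two closed connected sets to \emph{cover} the space, but $\overline{U}\cup\overline{V}$ need not equal $P$. For instance, if $A$ consists of three concentric circles with $x$ in the inner annulus and $y$ outside all three, then $U$ is an annulus, $\overline{U}$ separates $P$, and the inner disc lies in neither $\overline{U}$ nor $\overline{V}$. The obstacle is therefore not the non-compactness of $\overline{V}$ but the possible disconnectedness of $P\setminus\overline{U}$.

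The fix is immediate: replace $\overline{U}$ by $P\setminus V$. One checks that $P\setminus V$ is connected (it equals $\overline{U}$ together with the other components of $P\setminus\overline{U}$, each of whose closures meets $\overline{U}$), and then $P=(P\setminus V)\cup\overline{V}$ with $(P\setminus V)\cap\overline{V}=\Fr V$, so unicoherence gives $\Fr V$ connected. This is precisely the ``Brouwer property'' the paper invokes (citing Wilder): if $M\subset P$ is closed and connected and $V$ is a component of $P\setminus M$, then $\Fr V$ is connected. So your argument and the paper's coincide once this step is repaired.
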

\begin{proof}
Let $U$ be the component of $P \setminus A$ that contains $x$, and let $V$ be the component of $P \setminus \overline{U}$ that contains $y$. Then $\Fr V$ is connected because the plane satisfies the Brouwer property (see \cite{Wilder}, Definition I.4.1): if $M$ is a closed, connected subset of the plane $P$ and $V$ is a component of $P \setminus M$ then $\Fr V$ is connected. But $\Fr V \subset A$ so take the component of $A$ containing $V$.
\end{proof}

Recall that if $(A_i)_{i=1}^\infty$ is a sequence of sets in a topological space then $\liminf A_i$ is the set of points $x$ such that every neighborhood of $x$ intersects all but finitely many of the $A_i$. If $\liminf A_i$ is nontrivial and agrees with $\limsup A_i$ then the $A_i$ are said to converge in the Hausdorff sense and we write $\lim A_i = \limsup A_i = \liminf A_i$.\footnote{See \cite{HockingYoung}} If the $A_i$ live in the plane and infinitely many intersect some compact set then some subsequence converges in the Hausdorff sense.

\begin{lemma} \label{lemma:FrontierOfComplement}
Let $A \subset P$ be a closed, connected set that is a union of positive decomposition elements and let $U$ be a component of $P \setminus A$. Then $\Fr U$ is contained in a single decomposition element.
\end{lemma}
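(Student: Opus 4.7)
The plan is to split the proof into two main observations: (1) $\Fr U$ is connected, and (2) $\Fr U$ meets at most one element of $\cD^+$. Since $A$ is a closed, connected subset of the plane $P$, the Brouwer property cited in the proof of Lemma~\ref{lemma:SomeComponentSeparates} (applied to $M = A$) immediately gives observation~(1).

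For observation~(2), note that distinct positive decomposition elements are closed in $P$ and pairwise disjoint, so each set $\Fr U \cap K$ (for $K \in \cD^+$ with $K \subseteq A$) is closed in $\Fr U$, and the collection $\{\Fr U \cap K\}$ is a partition of $\Fr U$. Connectedness then reduces the claim to showing that each nonempty $\Fr U \cap K$ is also open in $\Fr U$. Equivalently, for every $z \in \Fr U \cap K$ one must rule out the existence of a sequence $z_n \in \Fr U \setminus K$ converging to $z$. Writing $z_n \in L_n \in \cD^+$ with $L_n \neq K$, and passing to a subsequence, either all $L_n$ equal some fixed $L$ (impossible, since $L$ is closed in $P$, so $z \in L$ would contradict $K \cap L = \emptyset$), or the $L_n$ are pairwise distinct.

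The main obstacle is the pairwise distinct case. My plan is to choose $u_n \in U$ with $u_n \to z$ (possible since $z \in \Fr U$), observe that continuity of $e^+$ forces $e^+(u_n) \to p := e^+(K)$ as well as $e^+(L_n) = e^+(z_n) \to p$, and then split on whether $e^+(L_n) = p$ eventually. If not, I apply Corollary~\ref{corollary:FinitelyManyCompCpnts}, using disjoint compact neighborhoods of $p$ and of an accumulation point of $\{e^+(L_n)\}$ that avoids $p$, to confine all but finitely many $L_n$ to a single component of $P \setminus (e^+)^{-1}(p)$; combined with the placement of $U$ relative to $(e^+)^{-1}(p)$ and continuity of $e^+$, this should produce a contradiction. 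If instead $e^+(L_n) = p$ eventually, then $K$ and the $L_n$ are distinct components of $(e^+)^{-1}(p)$ accumulating at $z$; here I expect to invoke the non-linking of ends (Lemma~\ref{lemma:Nonlinking}) together with the fact that the $u_n \in U$ approach $z$ from \emph{outside} $A$ to force the ends of the $L_n$ to link either with each other or with those of $K$, yielding the desired contradiction.
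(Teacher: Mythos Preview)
Your clopen-partition reduction is sound in principle, but the handling of the ``pairwise distinct'' case has a real gap. In Case~A you invoke ``an accumulation point of $\{e^+(L_n)\}$ that avoids $p$'', but by continuity of $e^+$ one has $e^+(L_n) = e^+(z_n) \to e^+(z) = p$, so the \emph{only} accumulation point of this sequence is $p$; there is nothing to feed into Corollary~\ref{corollary:FinitelyManyCompCpnts}, and the intended confinement of the $L_n$ to a single component of $P \setminus (e^+)^{-1}(p)$ never gets started. Even granting that confinement, the remaining clause (``combined with the placement of $U$\ldots this should produce a contradiction'') is not an argument. Case~B is likewise a hope rather than a proof: linking of ends is a statement about behavior at infinity in the circular order, and nothing you have written explains why \emph{local} accumulation of the $L_n$ near $z$, together with $u_n \to z$ from $U$, forces $\cE(L_n)$ to link with $\cE(L_m)$ or with $\cE(K)$.

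The paper's proof is organized differently and supplies exactly the global ingredient your outline lacks. Rather than proving openness of each $\Fr U \cap K$, one assumes $e^+(\Fr U)$ contains more than one value, picks \emph{three} points $k, l, m \in \Fr U$ with distinct $e^+$-images, and observes that the containing decomposition elements $K, L, M$ are mutually nonseparating (each meets $\Fr U$). One then takes $n_i \in U$ approaching $L$ and extracts a Hausdorff limit $N_\infty$ of the decomposition elements $N_i \ni n_i$. Since the ends of each $N_i$ lie on the $U$-side while $n_i$ is eventually on the $L$-side of any arc from $K$ to $M$ avoiding $L$, the $N_i$ must cross that arc; hence $N_\infty \cup L$ separates $K$ from $M$, contradicting Lemma~\ref{lemma:Nonlinking}. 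This Hausdorff-limit separation mechanism is what converts local accumulation into a statement about ends, and your local clopen approach does not obviously access it.
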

\begin{proof}
Note that $\Fr U$ is connected by the Brouwer property, so it suffices to show that $e^+(\Fr U)$ is a single point.

Suppose that $e^+(\Fr U)$ is not a single point. Then it is infinite so we can choose three points $k, l, m \in \Fr U$ whose images under $e^+$ are distinct and let $K, L, M \subset A$ be the positive decomposition elements that contain them. Note that these are mutually nonseparating, so by Lemma~\ref{lemma:Nonlinking} $\cE(K)$, $\cE(L)$, and $\cE(M)$ are mutually nonseparating in the circular order on $\cE$.

Assume that $K, L, M$ are ordered so that $\cE(L)$ is contained in the positively oriented interval between $\cE(K)$ and $\cE(M)$ and each decomposition element contained in $U$ has ends contained in the positively oriented interval between $\cE(M)$ and $\cE(K)$.

Now let $n_i \in P$ be a sequence of points in $U$ that approach some point in $L$ and let $N_i$ be the decomposition element containing $n_i$ for each $i$. Note that infinitely many of the $N_i$ intersect a compact set (for example, any arc from $K$ to $M$) so we can take a Hausdorff convergent subsequence. See Figure~\ref{figure:FrontierOfComplement}. Let $N_\infty$ be the limit of this subsequence. Then $N_\infty \cup L$ separates $K$ from $M$. Indeed, if $\gamma$ is any arc from $K$ to $M$ that avoids $L$ then the $n_i$ are eventually on the positive side of $\gamma$ and the $N_i$ have ends between $\cE(M)$ and $\cE(K)$, so the $N_i$ eventually intersect $\gamma$. So by Lemma~\ref{lemma:SomeComponentSeparates} some component $B$ of $N_\infty \cup L$ separates $K$ from $M$. But then $\cE(B)$ separates $\cE(K)$ from $\cE(M)$, a contradiction. So $e^+(\Fr U)$ is a single point as desired.

\begin{figure}[ht]
	\labellist
	\small\hair 2pt
	
	\pinlabel $\color[gray]{0.7}{A}$ [c] at 51 135
	\pinlabel $\color{blue}{\Fr U}$ [tl] at 24 61
	\pinlabel $K$ [br] at 146 98
	\pinlabel $L$ [l] at 95 149
	\pinlabel $M$ [bl] at 32 97
	
	\pinlabel $\color{red}{N_\infty}$ [r] at 89 50

	\endlabellist
	\centering
		\includegraphics{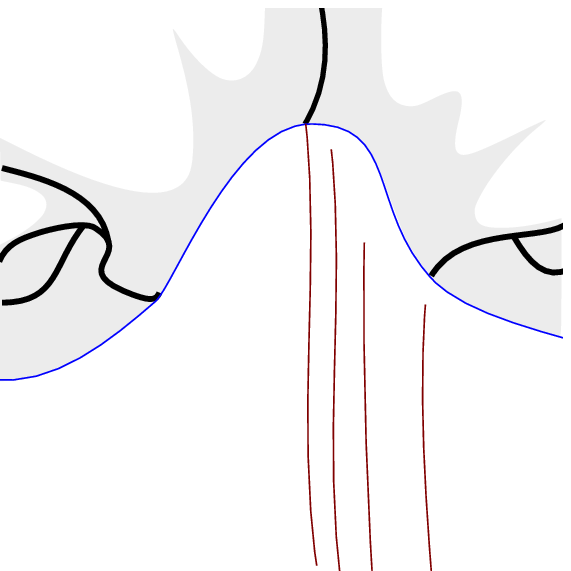}
	\caption{} \label{figure:FrontierOfComplement}
\end{figure}
\end{proof}

Combining the last two lemmas:

\begin{corollary} \label{corollary:DecompEltSeparates}
Let $A \subset P$ be a closed union of positive decomposition elements. If $A$ separates the points $x, y \in P$ then some decomposition element $K \subset A$ separates $x$ and $y$.
\end{corollary}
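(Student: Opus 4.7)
The plan is to chain together the two preceding lemmas directly. First I would apply Lemma~\ref{lemma:SomeComponentSeparates} to extract a single connected component $A'$ of $A$ that separates $x$ from $y$. Since each positive decomposition element is connected and $A$ is a union of them, the component $A'$ is itself a closed, connected union of positive decomposition elements — exactly the hypothesis required for Lemma~\ref{lemma:FrontierOfComplement}.

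Next I would let $U$ be the component of $P \setminus A'$ containing $x$. Because $A'$ separates $x$ from $y$, the point $y$ lies in a different component of $P \setminus A'$, so in particular $y \notin U$. Moreover $y \notin A'$, so $y \notin \overline{U}$ either, and hence $y$ lies in $P \setminus \overline{U}$. This ensures that $\Fr U$ separates $x$ from $y$ in $P$. Applying Lemma~\ref{lemma:FrontierOfComplement} to $U$ produces a single positive decomposition element $K$ with $\Fr U \subseteq K$; since $K$ is itself a component of $A$ (well, a subset of $A'$), we have $K \subseteq A$.

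Finally, I would note that enlarging the separating set preserves separation: removing more points from the plane can only break up components further, so if $x$ and $y$ lie in distinct components of $P \setminus \Fr U$, they also lie in distinct components of $P \setminus K$. Thus $K$ separates $x$ from $y$, completing the proof.

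The argument is essentially bookkeeping, so there is no serious obstacle; the only subtle point to be careful about is verifying that $y$ is not merely outside $U$ but outside $\overline{U}$, which is what allows $\Fr U$ (rather than $\overline{U}$) to be the separating set and hence lets us invoke Lemma~\ref{lemma:FrontierOfComplement} to upgrade from ``a component of $A$'' to ``a single decomposition element.''
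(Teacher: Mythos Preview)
Your proposal is correct and is exactly the combination the paper intends when it writes ``Combining the last two lemmas''; the paper gives no further detail, and your argument fills in precisely the steps one would expect (pass to a component via Lemma~\ref{lemma:SomeComponentSeparates}, take the frontier of the $x$-component, apply Lemma~\ref{lemma:FrontierOfComplement}, then enlarge to $K$). The one point you flag as subtle --- that $y\notin\overline{U}$ --- is indeed the only thing requiring a sentence, and your justification (that $\Fr U\subset A'$ and $y\notin A'$) is the right one.
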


\subsection{Extending the endpoint maps}
We can now prove our main theorem.

\begin{theorem}
The map $e^+: P \to S^2_\infty$ has a unique continuous extension to $\overline{P}$.
\end{theorem}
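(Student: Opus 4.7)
Uniqueness is immediate since $P$ is dense in $\overline{P}$ and $S^2_\infty$ is Hausdorff. For existence, for each $x \in S^1_u$ I define
\[
L(x) := \{\alpha \in S^2_\infty : \exists\, p_n \in P,\ p_n \to x \text{ in } \overline{P},\ e^+(p_n) \to \alpha\},
\]
and aim to show $|L(x)| = 1$. Then $\tilde{e}^+(x) := L(x)$ gives a well-defined extension, and the equivalent description $L(x) = \bigcap_n \overline{e^+(V_n \cap P)}$ for any nested neighborhood basis $\{V_n\}$ of $x$ in $\overline{P}$ immediately yields continuity at $x$.

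\textbf{$L(x)$ is a continuum.} Nonemptiness and closedness of $L(x)$ follow from compactness of $S^2_\infty$. Since $\overline{P}$ is a closed disc and $x \in S^1_u$ lies on its boundary, one may choose each $V_n$ so that $V_n \cap P$ is connected (e.g.\ open half-discs around $x$). Each $\overline{e^+(V_n \cap P)}$ is then a continuum in $S^2_\infty$, so the nested intersection $L(x)$ is itself a continuum.

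\textbf{Countability forces $|L(x)| = 1$.} The key claim is that $L(x) \subseteq e^+(\cK(x))$ where $\cK(x) := \{K \in \cD^+ : x \in \overline{K}^{\overline{P}}\}$. By Lemma~\ref{lemma:CountablyManyDecompElts}, $\cK(x)$ is countable, hence so is $e^+(\cK(x))$; a connected subset of a metric space contained in a countable set has at most one point (the continuous distance to any putative second point takes countably many values on the connected set, so an unattained intermediate value produces a clopen separation), and combined with nonemptiness this yields $|L(x)| = 1$. To prove the claim, take $\alpha \in L(x)$ with witness $p_n \to x$, $e^+(p_n) \to \alpha$, and let $K_n \in \cD^+$ contain $p_n$. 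The closures $\overline{K_n}^{\overline{P}}$ are continua in the compact disc $\overline{P}$, so after passing to a subsequence they converge in the Hausdorff topology to a continuum $K_\infty \ni x$. Any $y \in K_\infty \cap P$ has $e^+(y) = \alpha$ by continuity of $e^+$ on $P$, and so lies in some $K^* \in \cD^+$ with $e^+(K^*) = \alpha$; the goal is then to select $y$ so that $x \in \overline{K^*}^{\overline{P}}$, placing $K^* \in \cK(x)$.

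\textbf{Main obstacle.} The delicate step is arranging simultaneously that $K_\infty \cap P$ is nonempty and that an interior point $y$ can be chosen whose decomposition element $K^*$ satisfies $x \in \overline{K^*}$; a priori the $K_n$ could escape toward $S^1_u$ (leaving $K_\infty \subseteq S^1_u$), or $K_\infty \cap P$ could be so badly disconnected that $x$ is not in the closure of any single component. To exclude these pathologies I would combine Corollary~\ref{corollary:FinitelyManyCompCpnts} to confine the $K_n$'s to finitely many components of $P \setminus (e^+)^{-1}(B)$ for compact $B \subset S^2_\infty$ disjoint from a neighborhood of $\alpha$, Lemma~\ref{lemma:FrontierOfComplement} to force the relevant frontiers into single decomposition elements, and Lemma~\ref{lemma:EndsDense} to transfer statements about closures in $\overline{P}$ to statements about $\phi$-images of ends in $S^1_u$; Lemma~\ref{lemma:LimSups} provides the additional control on $e^+$-values along frontiers. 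Orchestrating these tools to track how the decomposition structure governs the approach to $x$ is the main technical content of the proof.
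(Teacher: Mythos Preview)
Your reduction is elegant---showing $L(x)$ is a continuum inside the countable set $e^+(\cK(x))$ would indeed finish the proof---but the proposal does not establish the inclusion $L(x)\subseteq e^+(\cK(x))$. You candidly flag the obstacle: after taking a Hausdorff limit $K_\infty$ of the $\overline{K_n}$, you must produce a point $y\in K_\infty\cap P$ whose decomposition element $K^*$ still has $x\in\overline{K^*}$, and you only list the lemmas you hope will do this without carrying out the argument. This is not a minor bookkeeping step: $K_\infty\cap P$ can be a complicated union of many distinct decomposition elements (all with $e^+$-value $\alpha$), and nothing you have written forces any one of them to reach $x$ in $\overline{P}$; nor have you ruled out $K_\infty\subseteq S^1_u$.

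The paper avoids this difficulty entirely by arguing contrapositively with separation rather than trying to realise each $\alpha\in L(x)$ by an element of $\cK(x)$. If $p\neq q$ lie in $L(x)$, pick any simple closed curve $A'\subset S^2_\infty$ separating them; then $A=(e^+)^{-1}(A')$ eventually separates the two witness sequences in $P$, Corollary~\ref{corollary:FinitelyManyCompCpnts} puts each sequence into a single complementary component, and Corollary~\ref{corollary:DecompEltSeparates} upgrades this to a \emph{single} decomposition element $K\subset A$ doing the separating. Since $\overline{K}$ then separates the tails in $\overline{P}$ while both tails converge to $x$, one gets $x\in\overline{K}$. Running this over an uncountable disjoint family of separating curves $A'$ produces uncountably many distinct $K\in\cK(x)$, contradicting Lemma~\ref{lemma:CountablyManyDecompElts}. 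The key ingredient you are missing is precisely Corollary~\ref{corollary:DecompEltSeparates}: it converts ``a closed union of decomposition elements separates'' into ``one decomposition element separates,'' which is what lets you land directly in $\cK(x)$ without any Hausdorff-limit analysis.
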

\begin{proof}
For each $x \in \partial \overline{P}$ let $(x_i)_{i=1}^\infty$ be a sequence of points in $P$ that converges to $x$. After taking subsequence we can assume that $e^+(x_i) \to p$ for some $p \in S^2_\infty$ as $i \to \infty$. Set $e^+(x) = p$.

To see that this is well-defined, suppose we have two such sequences $(x_i)_{i=1}^\infty$ and $(y_i)_{i=1}^\infty$ converging to $x$ with $e^+(x_i) \to p$ and $e^+(x_i) \to q$ as $i \to \infty$ for $p \neq q$. Let $A'$ be a simple closed curve separating $p$ and $q$. Then $A = (e^+)^{-1}(A')$ separates the $x_i$ from the $y_i$ for large enough $i$. By Lemma~\ref{lemma:LimSups} the $x_i$ are eventually contained in a single component of $P \setminus A$ as are the $y_i$. So by Corollary~\ref{corollary:DecompEltSeparates} some decomposition element $K \in \cD^+$ separates $\{x_i\}_{i = I}^\infty$ from $\{y_i\}_{i = I}^\infty$ for some $I$. Therefore, $\overline{K}$ separates $\{x_i\}_{i = I}^\infty$ from $\{y_i\}_{i = I}^\infty$ in $\overline{P}$. This implies that $x \in \overline{K}$.

There are uncountably many disjoint simple closed curves separating $p$ and $q$, hence there are uncountably many distinct decomposition elements $K$ with $x \in \overline{K}$. This contradicts Lemma~\ref{lemma:CountablyManyDecompElts}, so the extension of $e^+$ to $\overline{P}$ is well-defined. Continuity and uniqueness are immediate.
\end{proof}

We can extend the negative endpoint map similarly.

\begin{proposition}
The maps $e^\pm: \overline{P} \to S^2_\infty$ agree on the boundary $S^1_u$.
\end{proposition}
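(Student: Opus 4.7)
The plan is to reduce the statement to the remark following Lemma~\ref{lemma:FlowlinesEventuallySmall}, which says that flowlines far out in $P$ have positive and negative endpoints close together in $S^2_\infty$. Concretely: for each $\epsilon > 0$ there is a compact $A \subset P$ such that the spherical distance between $e^+(p)$ and $e^-(p)$ is less than $\epsilon$ whenever $p \in P \setminus A$. The maps $e^\pm$ have now both been extended continuously to $\overline{P}$ by the previous theorem (applied to $e^+$, and to $e^-$ by the remark immediately after it), so agreement on $S^1_u$ should follow from taking a sequence in $P$ approaching a boundary point and passing to the limit on both sides.

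Here is the step-by-step plan. First, fix $x \in S^1_u = \partial \overline{P}$ and choose a sequence $(x_i) \subset P$ with $x_i \to x$ in the topology of the end compactification. Second, observe that because $x \in \partial \overline{P}$ and $P$ is the interior, the sequence $(x_i)$ leaves every compact subset of $P$. Third, apply the remark after Lemma~\ref{lemma:FlowlinesEventuallySmall}: for every $\epsilon > 0$, the spherical distance $d(e^+(x_i), e^-(x_i))$ is eventually less than $\epsilon$, so $d(e^+(x_i), e^-(x_i)) \to 0$. Fourth, invoke continuity of the extended maps $e^\pm$ on $\overline{P}$ to conclude
\[ e^+(x) = \lim_{i \to \infty} e^+(x_i) = \lim_{i \to \infty} e^-(x_i) = e^-(x). \]

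There is essentially no obstacle here once the extension theorem is in hand; the only point requiring care is to make sure one is entitled to use the remark after Lemma~\ref{lemma:FlowlinesEventuallySmall}, which requires the sequence $(x_i)$ to escape every compact set of $P$. This is automatic because convergence in $\overline{P}$ to a boundary point forces escape to infinity in $P$ (since $P$ is open in $\overline{P}$ and any compact $A \subset P$ is closed in $\overline{P}$ and disjoint from $\partial \overline{P}$, so no neighborhood of $x$ in $\overline{P}$ contained in $(\overline{P} \setminus A) \cup \{x\}$ meets $A$). With this observation the proof is immediate.
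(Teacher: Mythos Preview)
Your proof is correct and follows essentially the same approach as the paper: pick a sequence $x_i \in P$ converging to $x \in S^1_u$, use Lemma~\ref{lemma:FlowlinesEventuallySmall} (or its immediate consequence) to get $d(e^+(x_i), e^-(x_i)) \to 0$, and conclude by continuity of the extensions. The paper's proof is simply a terser version of what you wrote.
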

\begin{proof}
Let $x \in \partial \overline{P}$ and choose a sequence of points $x_i \in P$ converging to $x$. By Lemma~\ref{lemma:FlowlinesEventuallySmall}, the distance between $e^+(x_i)$ and $e^-(x_i)$ approaches $0$ as $i \to \infty$.
\end{proof}

This completes the Extension Theorem. Our generalization of the Cannon-Thurston theorem follows easily:

\begin{theorem}
Let $\psi: P \to \bH^3$ be a section of the quotient map $\bH^3 \to P$. Then there is a unique continuous extension of $\psi$ to $\overline{P}$.
\end{theorem}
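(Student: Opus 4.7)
The plan is to define the extension by $\psi(x) := e^+(x) = e^-(x)$ for $x \in S^1_u$, where equality on the boundary is guaranteed by the preceding proposition. Since $P$ is dense in $\overline{P}$ and $\overline{P}$ is a disc (in particular, Hausdorff), any continuous extension is automatically unique, so the only content is continuity.

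For continuity I would first handle sequences $(x_i)$ in $P$ converging to a boundary point $x \in S^1_u$. The key observation is that a section $\psi$ sends $x_i$ to a point on the flowline $\pi^{-1}(x_i)$, which has endpoints $e^+(x_i)$ and $e^-(x_i)$. Since $x_i \to x$ means the $x_i$ eventually leave every compact subset of $P$, Lemma~\ref{lemma:FlowlinesEventuallySmall} tells us that the Euclidean diameter of $\pi^{-1}(x_i)$ in $\overline{\bH^3}$ tends to $0$. In particular, the Euclidean distance between $\psi(x_i)$ and $e^+(x_i)$ tends to $0$. By the Extension Theorem $e^+(x_i) \to e^+(x) = \psi(x)$, so $\psi(x_i) \to \psi(x)$ as well.

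To handle arbitrary sequences $y_n \to x$ in $\overline{P}$, where some $y_n$ may lie on $S^1_u$, I would use a standard diagonal argument: for each $y_n \in S^1_u$ pick a sequence of points in $P$ converging to $y_n$ on which the values of $\psi$ converge to $\psi(y_n)$ (possible by the previous paragraph), then extract a diagonal subsequence in $P$ converging to $x$ whose $\psi$-values converge to $\lim_n \psi(y_n)$; the previous paragraph then forces this limit to be $\psi(x)$.

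I do not expect any serious obstacle here: all the real work was absorbed into the Extension Theorem and the geometric Lemma~\ref{lemma:FlowlinesEventuallySmall}. The only subtlety to watch is the distinction between convergence in the hyperbolic and Euclidean metrics on $\bH^3 \cup S^2_\infty$, but since the extension theorem is stated with respect to the Euclidean compactification and Lemma~\ref{lemma:FlowlinesEventuallySmall} is also phrased in Euclidean terms, the estimates line up directly.
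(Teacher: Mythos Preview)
Your proposal is correct and follows exactly the paper's approach: define the extension on $S^1_u$ by $e^+$ and deduce continuity from Lemma~\ref{lemma:FlowlinesEventuallySmall} together with the Extension Theorem. The paper's proof is a single sentence to this effect; you have simply unpacked the details.
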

\begin{proof}
Just set $\psi$ on the boundary $S^1_u$ to be equal to $e^+$. That this is continuous follows from Lemma~\ref{lemma:FlowlinesEventuallySmall}.
\end{proof}

\section{Closed orbits and M\"obius-like groups}
If $M$ is a closed hyperbolic 3-manifold then each $g \in \pi_1(M)$ acts on $\bH^3 \cup S^2_\infty$ as a translation and rotation along a geodesic axis. The endpoints of this axis in $S^2_\infty$ are the fixed points for the action of $g$. For each such $g$ we will label the attracting fixed point $a_g$ and the repelling fixed point $r_g$. 

\begin{lemma} \label{lemma:FlowlinesAtFixedPoints}
Let $\rF$ be a quasigeodesic flow on a closed hyperbolic 3-manifold $M$ and let $g \in \pi_1(M)$. Then $\rF$ has a closed orbit in the free homotopy class represented by $g$ if and only if there has a flowline with an endpoint at either $a_g$ or $r_g$.
\end{lemma}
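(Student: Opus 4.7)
My plan is to prove both implications by converting the condition on flowline endpoints into a fixed-point problem for $g$ acting on the orbit space $P$.

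\textbf{Forward direction.} A closed orbit $\alpha$ of $\rF$ freely homotopic to $g$ lifts to a flowline $\tilde{\alpha} \subset \bH^3$ invariant under some conjugate of $g$; choosing the lift appropriately makes $\tilde{\alpha}$ invariant under $g$ itself. Since $\tilde{\alpha}$ is a quasigeodesic at bounded Hausdorff distance from the hyperbolic geodesic axis of $g$, and ideal endpoints are invariants of bounded Hausdorff distance in hyperbolic space, $\tilde{\alpha}$ has ideal endpoints $\{a_g, r_g\}$, giving the required endpoint at $a_g$ or $r_g$.

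\textbf{Reverse direction.} By symmetry it suffices to treat the case $e^+(p) = a_g$ for some $p \in P$ (the other three cases follow by replacing $g$ with $g^{-1}$ or exchanging the roles of $e^+$ and $e^-$). Set $p_n := g^{-n}(p)$. By equivariance of $e^+$, each $e^+(p_n) = a_g$; and since $e^-(p) \neq a_g$, the dynamics of $g^{-1}$ on $S^2_\infty$ force $e^-(p_n) \to r_g$. Because $a_g \neq r_g$, Lemma~\ref{lemma:FlowlinesEventuallySmall} prevents $\{p_n\}$ from escaping to infinity in $P$: otherwise $e^+(p_n)$ and $e^-(p_n)$ would have to converge together. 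So $\{p_n\}$ lies in a compact subset of $P$.

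Next I invoke Brouwer's plane translation theorem: an orientation-preserving homeomorphism of $\bR^2$ with a bounded orbit has a fixed point (fixed-point-free orientation-preserving homeomorphisms of the plane have every orbit escaping to infinity). Since $P \cong \bR^2$ and the $g^{-1}$-orbit of $p$ is bounded, $g$ has a fixed point $p_* \in P$. The flowline $\pi^{-1}(p_*)$ is $g$-invariant; its ideal endpoints $e^\pm(p_*)$ are fixed by $g$ on $S^2_\infty$, so $\{e^+(p_*), e^-(p_*)\} = \{a_g, r_g\}$. Projecting to $M$ yields a closed orbit, whose appropriate iterate represents $[g]$.

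The main obstacle is ensuring the closed orbit represents $[g]$ rather than $[g^{-1}]$: this corresponds to the condition $e^+(p_*) = a_g$, so that $g$ translates along $\pi^{-1}(p_*)$ in the forward flow direction. To force this, I would refine the fixed-point step by working inside the accumulation set $\Omega \subset P$ of $\{p_n\}$, which is compact, $g$-invariant, and by continuity of $e^\pm$ is contained in $\{q : e^+(q) = a_g, e^-(q) = r_g\}$. Any $g$-fixed point inside $\Omega$ automatically has the correct orientation; such a fixed point can be extracted via a planar fixed-point theorem for invariant continua (e.g.\ Cartwright--Littlewood on the filled hull of $\Omega$).
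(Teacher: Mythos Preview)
Your argument is essentially the paper's: for the reverse direction you exhibit a bounded $g^{-1}$-orbit in $P$ and invoke the Brouwer plane translation theorem to obtain a fixed point. The only cosmetic difference is that you deduce boundedness from Lemma~\ref{lemma:FlowlinesEventuallySmall} (the endpoints $e^+(p_n)=a_g$ and $e^-(p_n)\to r_g$ stay a definite distance apart, so the $p_n$ cannot escape), whereas the paper argues directly in $\bH^3$ that the translated flowlines $g^n(\gamma)$ all meet a fixed compact ball.

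Your last paragraph goes beyond the paper. The paper simply says the fixed point ``corresponds to a closed orbit in the free homotopy class of $g$'' and stops, without distinguishing $[g]$ from $[g^{-1}]$ or from a primitive root; for the application to the Closed Orbits Theorem only the existence of a $g$-fixed point in $P$ is used. If you do want the strict reading, note that your Cartwright--Littlewood sketch is incomplete: the accumulation set $\Omega$ of $\{p_n\}$ need not be connected, so ``the filled hull of $\Omega$'' is not well-defined and the theorem does not apply as stated. A cleaner target is the compact $g$-invariant set $(e^+)^{-1}(a_g)\cap(e^-)^{-1}(r_g)$ (nonempty since it contains any subsequential limit of the $p_n$), but extracting a fixed point \emph{inside} it still requires more than you have written.
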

\begin{proof}
Suppose that there is a closed orbit in the free homotopy class of $g$. Then some lift $\gamma$ of this orbit to the universal cover is fixed by $g$, so the endpoints of $\gamma$ are at $a_g$ and $r_g$.

Conversely, suppose that a flowline $\gamma$ in the lifted flow has an endpoint at $r_g$ (replace $g$ by $g^{-1}$ if it has a fixed point at $a_g$). Then the endpoints of $g^n(\gamma)$ approach $a_g$ and $r_g$ as $n \to \infty$, so the $g^n(\gamma)$ all intersect a compact set in $\bH^3$. In other words, the point $\pi(\gamma) \subset P$ in the orbit space corresponding to $\gamma$ has a bounded forward orbit under $g$. The Brouwer plane translation theorem\footnote{\cite{Franks}} implies that $g$ must fix a point in $P$, and this fixed point corresponds to a closed orbit in the free homotopy class of $g$.
\end{proof}

The Closed Orbits Theorem is an immediate consequence of the following.

\begin{theorem}
Let $M$ be a closed hyperbolic 3-manifold with a quasigeodesic flow $\rF$. Suppose that the action of $g \in \pi_1(M)$ on $S^1_u$ is not conjugate to a hyperbolic M\"obius transformation. Then $\rF$ has a closed orbit in the free homotopy class represented by $g$.
\end{theorem}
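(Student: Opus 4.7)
The plan is to prove the contrapositive: assume $\rF$ has no closed orbit in the free homotopy class of $g$, and deduce that $g$ acts on $S^1_u$ as a hyperbolic M\"obius transformation. By Lemma~\ref{lemma:FlowlinesAtFixedPoints}, the assumption forces $(e^\pm)^{-1}(\{a_g, r_g\}) \cap P = \emptyset$. In particular, $g$ has no fixed point in $P$, since any such fixed point would give a $g$-invariant flowline whose endpoints on $S^2_\infty$ are fixed by $g$ and hence lie in $\{a_g, r_g\}$. Brouwer's fixed point theorem, applied to $g$ as a homeomorphism of the closed disk $\overline{P}$, then produces a fixed point $x_0 \in \overline{P}$, which by the previous sentence must lie in $S^1_u$; by equivariance, $e^+(x_0) \in \{a_g, r_g\}$. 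Since $e^+|_P$ has dense image and $\overline{P}$ is compact, the continuous extension $e^+: \overline{P} \to S^2_\infty$ is surjective, so $(e^+)^{-1}(a_g)$ and $(e^+)^{-1}(r_g)$ are nonempty closed $g$-invariant subsets of $S^1_u$.

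The core of the argument will be a dynamical trichotomy on complementary arcs of $\mathrm{Fix}(g|_{S^1_u})$ in $S^1_u$. For such an arc $I = (x_1, x_2)$ with $g|_I$ attracting toward (say) $x_2$, any $y \in I$ has $g^n(y) \to x_2$ and $g^{-n}(y) \to x_1$. Continuity and equivariance of $e^+$, combined with the north-south action of $g$ on $S^2_\infty$ (attractor $a_g$, repeller $r_g$), constrain the pair $(e^+(x_1), e^+(x_2)) \in \{a_g, r_g\}^2$: the ``matching'' configuration $(r_g, a_g)$ forces $e^+(y) \notin \{a_g, r_g\}$; the configurations $(a_g, a_g)$ and $(r_g, r_g)$ force $e^+|_I \equiv a_g$ or $r_g$ respectively; and the configuration $(a_g, r_g)$ is impossible. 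A density argument will rule out both constant configurations: if $e^+|_I \equiv a_g$, density of $\phi(\cE^+)$ in $S^1_u$ provides an end $\kappa$ of some $K \in \cD^+$ with $\phi(\kappa) \in I$, and Lemma~\ref{lemma:EndsDense} together with continuity of $e^+$ then force $e^+|_K \equiv a_g$, producing a flowline with forward endpoint $a_g$ --- contradicting the assumption.

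Thus every complementary arc is in the matching configuration, and each fixed point of $g|_{S^1_u}$ must be either a super-attractor (attractor on both adjacent arcs, with $e^+$-value $a_g$) or a super-repeller (repeller on both, with $e^+$-value $r_g$), since a semi-stable fixed point would demand contradictory $e^+$-values. The parabolic case is immediately excluded, its unique fixed point being semi-stable; adjacent super-attractors and super-repellers must alternate, excluding any odd number of fixed points by parity. The remaining possibility of four or more fixed points in an alternating configuration will be ruled out by a finer argument combining Lemma~\ref{lemma:CountablyManyDecompElts} (on the accumulation of decomposition-element closures at isolated boundary points) with Brouwer's plane translation theorem applied to the fixed-point-free action of $g$ on $P$. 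After these eliminations, $g|_{S^1_u}$ must have exactly two fixed points in the matching configuration --- precisely hyperbolic M\"obius dynamics.

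The hardest step will be excluding the case of four or more fixed points of $g|_{S^1_u}$, since the matching trichotomy alone does not obstruct alternating super-attractor/super-repeller configurations of even cardinality; here the argument must combine the accumulation properties of decomposition-element closures with the Brouwer translation structure of $g$ on the orbit space to force some decomposition-element closure to reach an isolated boundary fixed point, which in turn produces a closed orbit.
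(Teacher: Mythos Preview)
Your approach differs substantially from the paper's. The paper's proof is very short: it cites the earlier paper \cite{Frankel} for the cases where $g|_{S^1_u}$ has no fixed points, has two indifferent fixed points, or has more than two fixed points, and only supplies a new argument for the parabolic (single fixed point) case. That argument is also simpler than your trichotomy: if $g$ has a unique fixed point $x \in S^1_u$ and no closed orbit in its class, then $(e^+)^{-1}(a_g)$ and $(e^-)^{-1}(r_g)$ are nonempty closed $g$-invariant subsets of $S^1_u$ (by Lemma~\ref{lemma:FlowlinesAtFixedPoints} and surjectivity of the extended $e^\pm$), hence each contains $x$; but $e^+ = e^-$ on $S^1_u$, forcing $a_g = r_g$, a contradiction.

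Your self-contained strategy is attractive, and the trichotomy on complementary arcs together with the density-of-ends argument (producing a $K \in \cD^+$ with $e^+(K) \in \{a_g, r_g\}$) correctly disposes of the parabolic case, the case $\mathrm{Fix}(g|_{S^1_u}) = S^1_u$, and finite fixed-point sets of odd cardinality. However, there is a genuine gap: you do not actually prove the four-or-more case, only announce that it ``will be ruled out by a finer argument'' invoking Lemma~\ref{lemma:CountablyManyDecompElts} and Brouwer plane translation. This is precisely the case the paper defers to \cite{Frankel}, and it is the substantive one --- an alternating configuration of $2k \geq 4$ super-attractors and super-repellers on $S^1_u$ is fully compatible with your matching trichotomy and with the required $e^+$-values at the fixed points, so nothing you have established so far obstructs it. It is not clear how the two tools you name would combine to yield a contradiction here, and you also do not address the possibility that $\mathrm{Fix}(g|_{S^1_u})$ is infinite (for instance a Cantor set), where ``alternation'' and ``parity'' no longer make sense. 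Until that case is filled in, the proposal is incomplete.
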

\begin{proof}
Suppose $f$ is a homeomorphism of the circle and $x \in S^1$ is an isolated fixed point for $f$. Then $x$ is either an attracting, repelling, or neither, in which case we call it indifferent. Each homeomorphism of the circle can be classified up to conjugacy by whether it has
\begin{enumerate}
	\item no fixed points,
	\item one indifferent fixed point (i.e. it is conjugate to a parabolic M\"obius transformation),
	\item two indifferent fixed points,
	\item two fixed points in an attracting-repelling pair (i.e. it is conjugate to a parabolic M\"obius transformation), or
	\item more than two fixed points.
\end{enumerate}

In \cite{Frankel}, we showed that if the action of $g$ fits into categories 1, 3, or 5 then the flow $\rF$ has a closed orbit in the free homotopy class of $g$. So we just need to extend this to category 2.

Suppose that $g$ acts on $S^1_u$ with a single fixed point $x$. If there are no closed orbits in the homotopy class represented by $g$ then by Lemma~\ref{lemma:FlowlinesAtFixedPoints}, there are no flowlines with an endpoint at either $a_g$ or $r_g$, so $A = (e^+)^{-1}(a_g)$ and $R = (e^-)^{-1}(r_g)$ are both contained in the boundary $S^1_u$ of $\overline{P}$. But $A$ and $B$ are invariant sets, so they must both contain the fixed point $x$, a contradiction.
\end{proof}

\end{document}